\theoremstyle{plain}% default
\newtheorem{thm}{Theorem}[section]
\newtheorem{lem}[thm]{Lemma}
\newtheorem{cor}[thm]{Corollary}
\theoremstyle{definition}
\newtheorem{assumpt}{Assumption}[section]
\theoremstyle{remark}
\newtheorem*{rem}{Remark}
\newcommand{\frec}{f^{\text{rec}}}
\DeclareMathOperator{\dom}{dom}
\DeclareMathOperator{\inte}{int}
\DeclareMathOperator*{\argmin}{arg \, min}
\begin{document}

\title{Two Polyak-Type Step Sizes for Mirror Descent}

\author[1,2]{Jun-Kai~You}
\author[1,3,4]{Yen-Huan~Li}

\affil[1]{Department of Computer Science and Information Engineering, National Taiwan University}
\affil[2]{MediaTek Research}
\affil[3]{Department of Mathematics, National Taiwan University}
\affil[4]{Center for Quantum Science and Engineering, National Taiwan University}

\maketitle

\begin{abstract}
We propose two Polyak-type step sizes for mirror descent and prove their convergences for minimizing convex locally Lipschitz functions. 
Both step sizes, unlike the original Polyak step size, do not need the optimal value of the objective function. 
\end{abstract}

\section{Introduction} \label{sec_intro}

% Consider the problem of approximating the minimum of a proper closed convex function $f$ on a non-empty closed convex set $\mathcal{X} \subseteq \mathbb{R}^n$. 
Throughout this paper, consider the optimization problem
\begin{equation}
x^\star \in \argmin_{x \in \mathcal{X}} f ( x ) , \tag{P} \label{eq_problem}
\end{equation}
for some proper closed convex function $f$ and non-empty closed convex set $\mathcal{X} \subseteq \mathbb{R}^d$. 
Denote by $g ( x )$ a subgradient of $f$ at $x$. 
When $\mathcal{X} = \mathbb{R}^d$, \citet{Polyak1969} proposed the subgradient method in Algorithm \ref{alg_polyak}. 
\begin{algorithm}[t]
\caption{Subgradient method with the Polyak step size}
\label{alg_polyak}
\begin{algorithmic}[1]
\REQUIRE $x_1 \in \mathbb{R}^d$. 
\FORALL{$k \in \mathbb{N}$}
\STATE $\eta_k \leftarrow \frac{f ( x_k ) - f^\star }{\norm{ g ( x_k ) }_{2}^2 }$, where $\norm{ \cdot }_2$ denotes the $\ell_2$-norm. 
\STATE $x_{k + 1} = x_k - \eta_k g ( x_k )$. 
\ENDFOR
\end{algorithmic}
\end{algorithm}
The specific choice of $\eta_k$ in Algorithm \ref{alg_polyak} is called the Polyak step size. 
The Polyak step size avoids evaluating the Lipschitz nor smoothness parameters of $f$, sometimes a difficult task. 
Nevertheless, its requires knowing $f^\star$, which limits its direct applications in practice. 

Remarkably, the convergence rate of the Polyak step size is often satisfactory and can be even optimal. 
\citet{Polyak1969} proved that projected gradient descent with the Polyak step size converges at a linear rate when $f$ is strongly convex and either smooth or Lipschitz. 
Later, \citet[Chapter 5.3]{Polyak1987} showed that asymptotically, gradient descent with the Polyak step size converges at an $O ( 1 / \sqrt{k} )$ rate, where $k$ denotes the number of iterations, when $f$ is locally Lipschitz. 
\citet{Hazan2019} showed that the Polyak step size yields the optimal iteration complexities achievable by gradient descent when $f$ is Lipschitz, Lipschitz and strongly convex, smooth, or smooth and strongly convex. 
\citet{Loizou2021} proposed a ``stochastic Polyak step size'' for stochastic gradient descent when $f$ is a finite sum and studied the iteration complexity under standard smoothness and strong convexity conditions. 
The stochastic Polyak step size was later extended by \citet{DOrazio2021} for stochastic mirror descent. 
\citet{Ren2022} studied the convergence of gradient descent with the Polyak step size under a generalized smoothness and generalized {\L}ojasiewicz condition. 

%Although the original Polyak step size requires the optimal value $f^\star$, there are several works addressing how this requirement can be avoided; 
%see, e.g., \citet{Brannlund2001} and the references therein. 
%Most relevant to our work are the ``first adjustment'' 

In this paper, we study the Polyak step size in the style of \citet{Polyak1987}. 
We do not aim to provide an iteration complexity bound under restrictive conditions on the objective function $f$, such as Lipschitzness and smoothness. 
Instead, we aim to guarantee asymptotic convergence for a very large class of $f$. 
In particular, we propose two algorithms based on mirror descent with Polyak-type step sizes and prove their convergences when $f$ is locally Lipschitz. 
Our motivation is twofold. 
\begin{enumerate}
\item There are several applications that violate the Lipschitz and smoothness assumptions, such as portfolio selection \citep{MacLean2012}, Poisson inverse problems \citep{Bertero2009,Hohage2016}, quantum state tomography \citep{Hradil1997,Paris2004}, and minimization of quantum R\'{e}nyi divergences \citep{You2022}. 
% See, e.g., \citet{Li2019a,Li2020,Lin2021,You2021} for some examples in quantitative finance, medical imaging, and quantum information theory. 
\item Though \citet{Polyak1987} has proved convergence of gradient descent with the Polyak step size, it is desirable to generalize the result for mirror descent. 
First, \citet{Polyak1987} does not consider the constrained optimization case. 
Second, projected gradient descent can generate infeasible iterates, causing the algorithm to ``stall'' before approaching the minimizer \citep{Knee2018,You2022}. 
\end{enumerate}
The interested reader is referred to Appendix \ref{appendix_problems} for the details. 
Moreover, unlike the original Polyak step size studied by \citet{Polyak1987} and its mirror descent extension \citet{DOrazio2021}, the two algorithms we propose do not need the optimal value $f^\star$. 

There have been several variants of the Polyak stpe size that do not need $f^\star$ either; 
see, e.g., the discussions by \citet{Polyak1969,Brannlund2001} and a recent solution by \citet{Hazan2019}. 
Among existing works, the most relevant to this paper are \citet{Nedic2001} and \citet{Goffin1999}. 
In particular, the two algorithms we propose are generalizations of the ``first adjustment'' considered by \citet{Nedic2001} and the subgradient level method analyzed by \citet{Goffin1999}\footnote{According to \citet{Goffin1999}, the algorithm was proposed by Br\"{a}nnlund in his PhD thesis, but we cannot find the PhD thesis. Therefore, we cite \citet{Goffin1999} instead of Br\"{a}nnlund's PhD thesis.}, respectively.

Our analyses differ significantly from those by \citet{Nedic2001} and by \citet{Goffin1999}. 
In particular, the non-Euclidean nature of mirror descent and the fact that the mirror map may not be defined on the boundary of $\mathcal{X}$ render the proof strategies of \citet{Nedic2001} and \citet{Goffin1999} not directly applicable. 
The measure of the ``traveling distance'' in the subgradient level method is also slightly modified to fit in the mirror descent framework; 
see Section \ref{sec_second_alg} for a discussion. 

The first algorithm we propose had appeared in our recent paper \cite{You2022}. 
That paper focuses on an application in quantum information theory and is targeted at information theorists. 
We isolate the optimization theory part and present it in a slightly more general form in Section \ref{sec_first}. 

\section{Problem Formulation and Useful Facts}

\subsection{Problem Formulation}

We consider solving the optimization problem \eqref{eq_problem} by mirror descent. 
Let $h$ be a convex function and $D_h$ the associated Bregman divergence, i.e., 
\[
D_h ( x, y ) \coloneqq h ( x ) - h ( y ) - \braket{ \nabla h ( y ), x - y } , \quad \forall ( x, y ) \in \dom h \times \dom \nabla h . 
\]
Let $g ( x )$ be a subgradient of $f$ at $x$. 
Define $T ( x; \eta )$ as a solution to the following minimization problem
\begin{equation}
\min_{y \in \mathcal{X}} \braket{ g ( x ), y - x } + \frac{D_h ( y, x )}{\eta} \label{eq_MD}
\end{equation}
for any \emph{step size} $\eta > 0$. 
Mirror descent iterates by iteratively applying the mapping $T$ with possibly different step sizes. 
The two ``algorithms'' we propose are indeed mirror descent with different step size selection rules. 
For convenience, we present the step size selection rule and mirror descent step \eqref{eq_MD} together and call them ``algorithms.''

\subsection{Assumptions}

We make the following assumptions in the rest of this paper. 
The first assumption is standard and ensures that $f$ is continuous around the minimizer \citep[Corollary 8.39]{Bauschke2017a}. 

\begin{assumpt} \label{assump_finite}
The optimal value $f^\star$ is finite. 
\end{assumpt}

The following two assumptions ensure that mirror descent with $D_h$ is well-defined for solving \eqref{eq_problem}. 
In particular, Assumption \ref{assump_legendre} resolves the ``domain consistency'' issue \citep{Bauschke1997}; 
Assumption \ref{assump_subdifferential} ensures that $g ( x )$ is well defined at any $x \in \mathcal{X} \cap \inte \, \dom h$ \citep[Corollary 16.18]{Bauschke2017}. 
See, e.g., the discussion in \citet{Bauschke2017}. 

\begin{assumpt} \label{assump_legendre}
The function $h$ is Legendre and the set $\mathcal{X} \cap \dom f$ is contained in the closure of $\dom h$. 
\end{assumpt}

\begin{assumpt} \label{assump_subdifferential}
The relative interior of $f$ contains $\mathcal{X} \cap \inte \, \dom h$. 
% The subdifferential mapping $\partial f$ is non-empty on $\mathcal{X} \cap \inte \dom h$. 
\end{assumpt}

The following assumption is standard for analyzing mirror descent-type methods; 
see, e.g., \citet{Juditsky2012,Juditsky2012a}. 

\begin{assumpt} \label{assump_strong_convexity}
The function $h$ is strongly convex with respect to a norm $\norm{ \cdot }$ (not necessarily $\ell_2$) on $\mathcal{X} \cap \inte \dom h$; 
that is, 
\[
D_h ( x, y ) \geq \frac{1}{2} \norm{ y - x } ^ 2, \quad \forall x, y \in \mathcal{X} \cap \inte \dom h . 
\]
\end{assumpt}

We will write $\norm{ \cdot }_*$ for the norm dual to $\norm{ \cdot }$. 
The last assumption, local boundedness of the subgradients, is key to our analyses. 
This assumption is also exploited by, e.g., \citet{Polyak1987} and \citet{Goffin1999}. 

\begin{assumpt} \label{assump_bounded_gradient}
The mapping $g ( \cdot )$ is bounded on any compact subset of $\mathcal{X} \cap \inte \, \dom h$. 
\end{assumpt}

One may equivalently assume that $f$ is Lipschitz on any compact subset of $\mathcal{X} \cap \inte \, \dom h$. 

\subsection{Useful Facts}

The following results, which will be used in our analyses, are perhaps familiar to experts. 
We present them for the convenience of the reader. 

The following theorem \citep[Theorem 3.8(i)]{Bauschke1997} helps verify that the iterates all lie in the interior of $\dom h$. 

\begin{thm} \label{thm_bauschke_borwein}
Suppose that $h$ is Legendre. 
Let $x \in \inte \dom h$ and $( x_k )_{k \in \mathbb{N}}$ be a sequence in $\inte \dom h$. 
If $D_h ( x, x_k ) \to \infty$, then $( x_k )_{k \in \mathbb{N}}$ converges to a point on the boundary of $\dom h$. 
\end{thm}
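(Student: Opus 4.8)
The plan is to read the conclusion as two assertions—every cluster point of $(x_k)$ lies on the boundary of $\dom h$, and the sequence actually converges—and to treat them in that order. The first is the heart of the matter and uses only that a Legendre function is essentially smooth; promoting it to convergence to a \emph{single} boundary point is where I expect the genuine difficulty to sit. The unifying device is contraposition against $D_h(x, x_k) \to \infty$: a real sequence diverging to $+\infty$ forces every one of its subsequences to diverge, so it suffices to exhibit, for any putative interior cluster point, a subsequence of $\bigl(D_h(x, x_k)\bigr)$ that stays finite.

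First I would rule out interior cluster points. Since $h$ is Legendre it is essentially smooth, hence differentiable on the open convex set $\inte \dom h$ with an automatically continuous gradient there (a finite convex function differentiable on an open convex set is $C^1$), and $h$ itself is continuous on $\inte \dom h$. Suppose, for contradiction, that some subsequence satisfies $x_{k_j} \to \bar x$ with $\bar x \in \inte \dom h$. Using continuity of $h$ and of $\nabla h$ at $\bar x$ and passing to the limit term by term,
\[
D_h(x, x_{k_j}) = h(x) - h(x_{k_j}) - \braket{ \nabla h(x_{k_j}), x - x_{k_j} } \longrightarrow D_h(x, \bar x) < \infty ,
\]
which contradicts $D_h(x, x_{k_j}) \to \infty$. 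Hence no cluster point of $(x_k)$ lies in $\inte \dom h$; equivalently, every cluster point of $(x_k)$ belongs to the boundary of $\dom h$.

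The remaining and, I expect, main obstacle is upgrading this to convergence to a single boundary point. Two ingredients are needed: boundedness of $(x_k)$, so that cluster points exist and are finite, and uniqueness of the cluster point. Boundedness is not forced by the hypotheses alone—one can arrange $\norm{x_k} \to \infty$ together with $D_h(x, x_k) \to \infty$ when $\dom h$ is unbounded—so it must be imported from the ambient setting (for instance $\dom h$ bounded, as in the paper's applications, or a separately established boundedness of the iterates). Granting boundedness, the previous step already confines every cluster point to the boundary of $\dom h$; for uniqueness I would pass to the dual through the identity $D_h(x, x_k) = D_{h^*}(\nabla h(x_k), \nabla h(x))$, with $h^*$ the Legendre conjugate and $\nabla h(x) \in \inte \dom h^*$, and exploit that essential strict convexity of $h$ (equivalently essential smoothness of $h^*$) makes $\nabla h$ a homeomorphism between the interiors. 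I would flag honestly that pinning the sequence to one limit is exactly the delicate point, requiring the full Legendre hypothesis rather than essential smoothness alone and, in the absence of a convergence mechanism such as Fej\'er monotonicity, an additional input; what the argument cleanly delivers is that $(x_k)$ accumulates only on the boundary of $\dom h$, which is the stated convergence precisely when the sequence is independently known to converge.
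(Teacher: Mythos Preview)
The paper does not prove this theorem; it is quoted from \citet[Theorem~3.8(i)]{Bauschke1997}. More to the point, the implication as stated here is the reverse of what Bauschke and Borwein actually prove: their Theorem~3.8(i) says that if $(x_k)\subset\inte\dom h$ \emph{converges to a boundary point} of $\dom h$, then $D_h(x,x_k)\to\infty$. The converse written in the paper is false in general. For instance, with $h(u,v)=-\log u-\log v$ on $(0,\infty)^2$ and $x=(1,1)$, let $x_k$ alternate between $(1/k,1)$ and $(1,1/k)$; then $D_h(x,x_k)\to\infty$ while $(x_k)$ has the two distinct boundary cluster points $(0,1)$ and $(1,0)$ and does not converge.

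Your instincts are therefore exactly right. The first part of your argument---that no cluster point of $(x_k)$ can lie in $\inte\dom h$---is precisely the contrapositive of the correct Bauschke--Borwein statement, and your proof of it (continuity of $h$ and $\nabla h$ on the interior for a Legendre function) is the standard one. Your hesitation about upgrading this to convergence to a \emph{single} boundary point, and about boundedness, is well founded: neither follows from the hypothesis $D_h(x,x_k)\to\infty$ alone, and no amount of Legendre duality will manufacture them. What the paper actually needs and uses is Corollary~\ref{cor_bauschke_borwein}, which is the genuine contrapositive of the correct Theorem~3.8(i) (combined, in the paper's applications, with boundedness obtained separately from Assumption~\ref{assump_strong_convexity}). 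So there is nothing to fix in your reasoning; the defect is in the paper's transcription of the cited result.
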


\begin{cor} \label{cor_bauschke_borwein}
Suppose that $h$ is Legendre. 
Let $x \in \inte \, \dom h$ and $\set{ x_k }_{k \in \mathbb{N}} \subset \inte \, \dom h$. 
If $D_h ( x, x_k ) \leq c$ for some $c > 0$ for all $k \in \mathbb{N}$, then the closure of $\set{ x_k }_{k \in \mathbb{N}}$ lies in $\inte \, \dom h$. 
\end{cor}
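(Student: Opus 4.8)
The plan is to derive Corollary~\ref{cor_bauschke_borwein} from Theorem~\ref{thm_bauschke_borwein} by a contrapositive argument applied to an arbitrary convergent subsequence. First I would note that it suffices to show that every cluster point of $\set{x_k}_{k\in\mathbb{N}}$ lies in $\inte\,\dom h$; the closure is the union of the sequence itself (which is in $\inte\,\dom h$ by hypothesis) and its cluster points, so controlling the cluster points controls the closure. So let $x_{k_j} \to \bar{x}$ be any convergent subsequence, with limit $\bar{x}$ a priori lying only in $\overline{\dom h}$ (since $\dom h$ need not be closed). The goal is to rule out $\bar{x} \in \partial\,\dom h$.

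Next I would invoke the uniform bound: since $D_h(x,x_k)\le c$ for all $k$, in particular $D_h(x,x_{k_j})\le c$ along the subsequence, so the sequence $\bigl(D_h(x,x_{k_j})\bigr)_j$ certainly does not diverge to $+\infty$. The contrapositive of Theorem~\ref{thm_bauschke_borwein} — applied to the sequence $(x_{k_j})_j$ in $\inte\,\dom h$ and the point $x\in\inte\,\dom h$ — then says: if $(x_{k_j})_j$ converges to a boundary point of $\dom h$, then $D_h(x,x_{k_j})\to\infty$. Since the latter fails, the former must fail too; that is, the limit $\bar{x}$ of $(x_{k_j})_j$ is not a boundary point of $\dom h$. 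Combined with $\bar{x}\in\overline{\dom h}$, this forces $\bar{x}\in\inte\,\dom h$. Since the subsequence was arbitrary, every cluster point lies in $\inte\,\dom h$, and hence so does the closure of $\set{x_k}_{k\in\mathbb{N}}$.

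The one point that needs a little care — and the closest thing to an obstacle — is the logical shape of the quoted theorem: Theorem~\ref{thm_bauschke_borwein} has the form ``$D_h(x,x_k)\to\infty \implies x_k \to \partial\,\dom h$,'' and I want to use it as ``$x_k \to \partial\,\dom h \implies D_h(x,x_k)\to\infty$,'' which is \emph{not} the contrapositive of the stated implication. To make this rigorous I would instead apply Theorem~\ref{thm_bauschke_borwein} directly in the forward direction: take the convergent subsequence $(x_{k_j})_j \to \bar{x}$, and suppose for contradiction that $\bar{x}\in\partial\,\dom h$; I would then need the converse direction, i.e.\ that convergence to the boundary forces $D_h(x,\cdot)\to\infty$. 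If the paper's Theorem~\ref{thm_bauschke_borwein} is genuinely only the one-directional statement, then the clean fix is to note that \citet[Theorem 3.8]{Bauschke1997} in fact establishes the equivalence, or to argue directly from Legendreness: a Legendre $h$ has $\nabla h$ blowing up in norm near $\partial\,\dom h$, and more to the point $D_h(x,\cdot)$ is coercive in the sense that its sublevel sets $\set{y : D_h(x,y)\le c}$ are compact subsets of $\inte\,\dom h$ (this is a standard consequence of $h$ being a Legendre function and $x\in\inte\,\dom h$). Under that phrasing the corollary is immediate: $\set{x_k}\subseteq \set{y : D_h(x,y)\le c}$, a compact subset of $\inte\,\dom h$, so its closure is contained in that same compact set and hence in $\inte\,\dom h$.

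In writing the final proof I would pick whichever of these two routes matches the exact statement the authors intend by Theorem~\ref{thm_bauschke_borwein}; assuming the bidirectional reading (which is what makes the corollary a one-line consequence), the proof is: the sublevel set $L \coloneqq \set{y\in\inte\,\dom h : D_h(x,y)\le c}$ is closed in $\mathbb{R}^d$ and, by Theorem~\ref{thm_bauschke_borwein}, contains no sequence converging to $\partial\,\dom h$, hence $\overline{L}\subseteq\inte\,\dom h$; since $\set{x_k}_{k\in\mathbb{N}}\subseteq L$, we get $\overline{\set{x_k}_{k\in\mathbb{N}}}\subseteq\overline{L}\subseteq\inte\,\dom h$, which is the claim.
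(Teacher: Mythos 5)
Your proof is correct, and it is essentially the argument the paper implicitly intends (the paper states the corollary without proof, treating it as immediate from Theorem~\ref{thm_bauschke_borwein}): pass to an arbitrary convergent subsequence, observe that its limit lies in the closure of $\dom h$, and use the Bauschke--Borwein boundary result to exclude the boundary. Your observation about the logical shape of Theorem~\ref{thm_bauschke_borwein} is a genuine catch, not a quibble: as literally stated in the paper (``$D_h(x,x_k)\to\infty \implies x_k$ converges to a boundary point''), the theorem does \emph{not} yield the corollary, since one needs the converse implication along subsequences. The original result \citet[Theorem 3.8(i)]{Bauschke1997} is indeed stated in the direction you need --- convergence of a sequence in $\inte\,\dom h$ to a boundary point of $\dom h$ forces $D_h(x,\cdot)\to+\infty$ --- so the paper's paraphrase has the implication reversed, and your ``clean fix'' is the right reading.

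One caution on your fallback route: the claim that the sublevel set $\set{y\in\inte\,\dom h : D_h(x,y)\le c}$ is a \emph{compact} subset of $\inte\,\dom h$ for every Legendre $h$ is false in general --- boundedness can fail (e.g.\ $h(y)=\sqrt{1+y^2}$ on $\mathbb{R}$ is Legendre with full domain, yet $D_h(x,y)$ stays bounded as $y\to+\infty$, so the sublevel sets are unbounded). Fortunately compactness is not needed anywhere in your argument: the corollary only asserts that the closure avoids the boundary of $\dom h$, and since every cluster point of $\set{x_k}$ lies in $\overline{\dom h}=\inte\,\dom h\cup\partial\,\dom h$, excluding boundary cluster points via the (correctly oriented) theorem suffices. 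Note also that the paper itself obtains boundedness of the iterates separately, from the strong convexity of $h$ (Assumption~\ref{assump_strong_convexity}), before invoking this corollary, which confirms that compactness is not meant to be part of its content.
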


Let $x^\star$ be the minimizer. 
In our analyses, it is desirable to set $x$ to be $x^\star$ in Theorem \ref{thm_bauschke_borwein}. 
Nevertheless, it can happen that $x^\star$ does not belong to\footnote{Consider, for example, minimizing the function $f ( x, y ) \coloneqq x$ on the the set 
\[ \set{ ( x, y ) \in \mathbb{R}^2 : x \geq 0, y \geq 0, x + y = 1 }
\] 
by entropic mirror descent. Then, $h$ is the engative Shannon entropy and $\inte \, \dom h$ is the interior of the positive orthant. Obviously, the minimizer is $( 1, 0 )$ and does not belong to $\inte \, \dom h$} $\inte\, \dom h$. 
The issue can be easily circumvented as $f$ is closed. 

\begin{lem} \label{lem_continuity}
Let $x^\star$ be a minimizer of $f$ on $\mathcal{X}$. 
For any $\varepsilon > 0$, there exists some $x_\varepsilon \in \mathcal{X} \cap \dom h$ such that $f ( x_\varepsilon ) \leq f^\star + \varepsilon$. 
\end{lem}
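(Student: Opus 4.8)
The plan is to exploit the fact that $f$ is proper, closed, and convex, so that its restriction to any line segment ending at the minimizer is lower semicontinuous, and then to approach $x^\star$ from inside $\dom h$ along such a segment. Concretely, by Assumption \ref{assump_legendre} the set $\mathcal{X} \cap \dom f$ lies in the closure of $\dom h$, and $\dom h$ has non-empty interior (since $h$ is Legendre); pick any reference point $x_0 \in \mathcal{X} \cap \inte \dom h$ — such a point exists because mirror descent is assumed well-defined, e.g. the initial iterate. For $t \in (0,1]$ consider the convex combination $x_t \coloneqq (1-t) x^\star + t x_0$. Since $\mathcal{X}$ is convex, $x_t \in \mathcal{X}$; and by the standard line-segment property of convex sets with non-empty interior \citep[e.g.][Theorem 6.1]{Rockafellar1970}, $x_t \in \inte \dom h \subseteq \dom h$ for every $t \in (0,1]$, because $x_0 \in \inte \dom h$ and $x^\star \in \operatorname{cl} \dom h$. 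Hence $x_t \in \mathcal{X} \cap \dom h$ for all $t \in (0,1]$.

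It remains to control the value $f(x_t)$ as $t \to 0^+$. Define $\varphi(t) \coloneqq f(x_t)$ for $t \in [0,1]$; this is a convex function of $t$ on $[0,1]$ with $\varphi(0) = f(x^\star) = f^\star$, and it is finite on $[0,1]$ since $x_t \in \mathcal{X} \cap \dom f$ is forced once we note $x^\star \in \dom f$ and $x_0 \in \dom f$ (the latter holds because $x_0 \in \inte\dom h \subseteq \mathcal{X}\cap\dom f$ by the domain assumptions, or simply because $f(x_0)<\infty$ along the mirror-descent iterates). A finite convex function on a compact interval is continuous there, in particular right-continuous at $0$, so $\varphi(t) \to \varphi(0) = f^\star$ as $t \to 0^+$. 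Therefore, given $\varepsilon > 0$, choose $t$ small enough that $\varphi(t) \le f^\star + \varepsilon$ and set $x_\varepsilon \coloneqq x_t$; then $x_\varepsilon \in \mathcal{X} \cap \dom h$ and $f(x_\varepsilon) \le f^\star + \varepsilon$, as desired.

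The only subtle point — and the one I expect to be the main obstacle to state cleanly — is justifying that the segment $(x^\star, x_0]$ meets $\dom h$, i.e. invoking the line-segment principle correctly when $x^\star$ is only in the \emph{closure} of $\dom h$ (as in the entropic example in the footnote, where $x^\star = (1,0)$). The resolution is exactly the classical convex-analysis fact that if $x_0 \in \inte C$ and $x^\star \in \operatorname{cl} C$ then $(1-t)x^\star + t x_0 \in \inte C$ for all $t \in (0,1]$; applied with $C = \dom h$ this places every $x_t$, $t>0$, safely in $\inte\dom h$. One should also double-check that the chosen $x_0$ indeed lies in $\mathcal{X} \cap \inte\dom h$ and in $\dom f$ so that $\varphi$ is finite-valued on all of $[0,1]$ — this is where Assumptions \ref{assump_legendre} and \ref{assump_subdifferential} (together with the standing hypothesis that mirror descent is well-posed) are used. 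Everything else is a routine continuity argument for a one-dimensional convex function.
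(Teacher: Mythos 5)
Your proof takes essentially the same route as the paper's: restrict $f$ to the segment joining $x^\star$ to a point of $\mathcal{X} \cap \inte \dom h$ and let $t \to 0^+$ (the paper invokes the one-dimensional fact that a proper closed convex function is continuous on the closure of its domain, whereas you appeal to finiteness of the convex function $\varphi$ on $[0,1]$). One small inaccuracy: a finite convex function on $[0,1]$ need \emph{not} be continuous at the endpoint $t = 0$ (it may jump up there), but this is harmless since convexity gives $\varphi(t) \leq (1-t) f^\star + t f(x_0) \to f^\star$ directly, which is the only direction you need; your explicit use of the line-segment principle to place $x_t$ in $\inte \dom h$ is, if anything, more careful than the paper's own argument.
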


\begin{proof}
If $x^\star \in \mathcal{X} \cap \inte \, \dom h$, then we can simply choose $x_\varepsilon = x^\star$. 
Otherwise, by Assumption \ref{assump_subdifferential}, there is some $y$ in the intersection of $\mathcal{X} \cap \inte \, \dom h$ and the relative interior of $\dom f$. 
Define 
\[
\varphi ( t ) \coloneqq f ( x^\star + t ( y - x ^ \star ) ), \quad \forall t \in \mathbb{R} . 
\]
It is easily checked that $\varphi$ is also proper closed convex. 
Then, $\varphi$ is continuous on the closure of its domain \citep[Corollary 9.15]{Bauschke2017}. 
Therefore, we can choose $x_\varepsilon = x^\star + t ( y - x^\star )$ for some $t$ small enough. 
\end{proof}

The following is an intermediate result in the standard analysis of mirror descent \citep{Juditsky2012a} and is seldom explicitly stated as a lemma. 
We provide its proof for completeness. 

\begin{lem} \label{lem_standard_result}
Let $x \in \mathcal{X} \cap \inte \, \dom h$. 
Let $x_+ = T ( x; \eta )$ for some $\eta > 0$. 
Then, $x_+ \in \mathcal{X} \cap \inte \, \dom h$ and 
\[
\eta \left( f ( x ) - f ( y ) \right) \leq D_h ( y, x ) - D_h ( y, x_+ ) + \frac{\eta ^ 2 \norm{ g ( x ) }_*^2}{2} . 
\]
\end{lem}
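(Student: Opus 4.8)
The plan is to follow the classical three-move mirror-descent argument, adapted to make the two structural claims ($x_+ \in \mathcal{X} \cap \inte \dom h$ and the descent inequality) come out together. First I would invoke the optimality conditions for the strongly convex subproblem \eqref{eq_MD}. Since $h$ is Legendre (Assumption \ref{assump_legendre}), the objective $y \mapsto \braket{g(x), y - x} + D_h(y,x)/\eta$ is strictly convex on $\dom h$, and its gradient blows up at the boundary of $\dom h$; hence the minimizer $x_+$ exists, is unique, and lies in $\inte \dom h$ — and therefore in $\mathcal{X} \cap \inte \dom h$. This is exactly where Assumption \ref{assump_legendre} is doing its work, and I expect it to be the main technical point to state carefully, since the rest is algebra. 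The first-order condition at $x_+$ reads
\[
\braket{ \eta g(x) + \nabla h(x_+) - \nabla h(x), \, y - x_+ } \geq 0, \quad \forall y \in \mathcal{X} \cap \dom h .
\]

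Second, I would apply this with $y$ taken to be (an approximation of) the comparator point. Rearranging the first-order inequality gives $\eta \braket{ g(x), x_+ - y } \leq \braket{ \nabla h(x_+) - \nabla h(x), y - x_+ }$, and the right-hand side is precisely the "three-point identity" for the Bregman divergence:
\[
\braket{ \nabla h(x_+) - \nabla h(x), y - x_+ } = D_h(y,x) - D_h(y,x_+) - D_h(x_+,x) .
\]
Combining the two yields $\eta \braket{ g(x), x_+ - y } \leq D_h(y,x) - D_h(y,x_+) - D_h(x_+,x)$.

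Third, I would split $\braket{g(x), x - y} = \braket{g(x), x - x_+} + \braket{g(x), x_+ - y}$ and use convexity of $f$, namely $f(x) - f(y) \leq \braket{g(x), x - y}$. For the leftover term $\eta\braket{g(x), x - x_+}$, apply Cauchy–Schwarz (in the $\norm{\cdot}$/$\norm{\cdot}_*$ duality) and then Young's inequality $\eta \norm{g(x)}_* \norm{x - x_+} \leq \tfrac{\eta^2 \norm{g(x)}_*^2}{2} + \tfrac{1}{2}\norm{x-x_+}^2$; the quadratic term is absorbed by $-D_h(x_+,x) \leq -\tfrac12\norm{x-x_+}^2$ from Assumption \ref{assump_strong_convexity}. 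Assembling everything gives
\[
\eta\left( f(x) - f(y) \right) \leq D_h(y,x) - D_h(y,x_+) + \frac{\eta^2 \norm{g(x)}_*^2}{2} .
\]
One subtlety: the first-order condition only licenses $y \in \mathcal{X} \cap \dom h$, whereas $D_h(y,x_+)$ as written wants $y \in \dom h$; since the inequality is stated for general $y$, I would note it is vacuous (or interpreted via $D_h = +\infty$) unless $y \in \mathcal{X} \cap \dom h$, matching how Lemma \ref{lem_continuity} supplies near-optimal comparators in $\mathcal{X} \cap \dom h$. That boundary-of-domain bookkeeping, rather than any deep inequality, is the only place where care is genuinely needed.
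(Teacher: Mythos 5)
Your proposal is correct and follows essentially the same route as the paper: convexity of $f$, the Bregman proximal inequality (which you derive from first-order optimality plus the three-point identity rather than citing it), strong convexity of $h$, and generalized Cauchy--Schwarz with Young's inequality (equivalently, maximizing over $\norm{x - x_+}$). Your additional justification that $x_+ \in \mathcal{X} \cap \inte\,\dom h$ via the Legendre property of $h$, and your remark on the domain of valid comparators $y$, are welcome details that the paper's proof leaves implicit.
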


\begin{proof}
We write 
\begin{align*}
\eta_k \left( f ( x ) - f ( y ) \right) & \leq \eta_k \braket{ g ( x ), x - y } \\
& = \eta_k \braket{ g ( x ), x_+ - x } + \eta_k \braket{ g ( x ), x - x_+ } \\
& \leq D_h ( y, x ) - D_h ( y, x_+ ) - D_h ( x_+, x ) + \eta_k \braket{ g ( x ), x - x_+ } \\
& \leq D_h ( y, x ) - D_h ( y, x_+ ) - \frac{\norm{ x - x_+ }^2}{2} + \eta \braket{ g ( x ), x - x_+ } \\
& \leq D_h ( y, x ) - D_h ( y, x_+ ) - \frac{\norm{ x - x_+ }^2}{2} + \eta \norm{ g( x ) }_* \norm{ x - x_+ } . 
\end{align*}
The first inequality follows from the convexity of $f$; 
the second follows from the Bregman proximal inequality \citep{Teboulle2018}; 
the third follows from Assumption \ref{assump_strong_convexity}. 
It remains to maximize the right-hand side with respect to $\norm{ x - x_+ }$. 
\end{proof}

\section{First Algorithm} \label{sec_first}

\subsection{Algorithm and Convergence Guarantee}

Algorithm \ref{alg_first} presents the first algorithm we propose. 
The algorithm generalizes the ``first adjustment'' of \citet{Nedic2001} by replacing the $\ell_2$-norm with a general norm. 
Compared to Algorithm \ref{alg_polyak}, the only difference lies in that $f^\star$ is replaced by a sequence of its estimates $( \hat{f}_k )_{k \in \mathbb{N}}$. 

\begin{algorithm}[t]
\caption{First Algorithm} 
\label{alg_first}
\begin{algorithmic}[1]
\REQUIRE $\delta_1 \geq \delta > 0$, $\beta < 1$, $\gamma \geq 1$, $c > 1 / 2$, and $x_1 \in \mathcal{X} \cap \inte \dom h$. 
\FORALL{$k \in \mathbb{N}$}
\IF{$g(x_k) = 0$}
\STATE Return $x_k$ as a minimizer and terminate. 
\ENDIF
\STATE $\hat{f}_k \leftarrow \min_{1 \leq \kappa \leq k} f ( x_\kappa ) - \delta_k$. 
\STATE $\eta_k \leftarrow \frac{f(x_k) - \hat{f}_k}{c \norm{ g ( x_k ) }_*^2}$. 
\STATE $x_{k + 1} \leftarrow T ( x_k ; \eta_k )$. 
\IF{$f ( x_{k + 1} ) \leq \hat{f}_k$}
\STATE $\delta_{k + 1} = \gamma \delta_k$. 
\ELSE
\STATE $\delta_{k + 1} = \max \set{ \beta \delta_k, \delta }$. 
\ENDIF
\ENDFOR
\end{algorithmic}
\end{algorithm}

The following theorem guarantees that Algorithm \ref{alg_first} asymptotically converges to an approximate solution to \eqref{eq_problem}. 

\begin{thm} \label{thm_first}
If Assumptions 1--5 hold, then Algorithm \ref{alg_first} satisfies
\[
\inf_{k \in \mathbb{N}} f ( x_k ) \leq f ^ \star + \delta . 
\]
\end{thm}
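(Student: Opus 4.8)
The plan is to argue by contradiction, adapting the classical ``level method'' analysis of \citet{Goffin1999,Nedic2001} to the non-Euclidean setting. First I would record some bookkeeping. If the algorithm ever terminates, it does so at an $x_k$ with $g(x_k)=0$, i.e. $0\in\partial f(x_k)$, so $x_k$ minimizes $f$ over all of $\mathbb{R}^d\supseteq\mathcal{X}$ and $f(x_k)=f^\star$, and there is nothing to prove; so assume the algorithm runs forever. By induction $\delta_k\geq\delta>0$ for all $k$ (the $\gamma$-branch multiplies by $\gamma\geq1$, the other branch takes a max with $\delta$, and $\delta_1\geq\delta$). By Lemma~\ref{lem_standard_result} and induction from $x_1\in\mathcal{X}\cap\inte\dom h$, every iterate lies in $\mathcal{X}\cap\inte\dom h$, so every $g(x_k)$ is well defined and, by the termination check, $\norm{g(x_k)}_*>0$; hence each $\eta_k$ is well defined and, writing $m_k\coloneqq\min_{1\leq\kappa\leq k}f(x_\kappa)$ and $\Delta_k\coloneqq f(x_k)-\hat f_k=(f(x_k)-m_k)+\delta_k$, we get $\Delta_k\geq\delta_k\geq\delta$ and $\eta_k>0$. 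Finally, $(m_k)$ is non-increasing and, since each $x_\kappa\in\mathcal{X}$, bounded below by $f^\star$ (Assumption~\ref{assump_finite}), so $m_k\downarrow m_\infty\coloneqq\inf_k f(x_k)\in[f^\star,\infty)$.

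Next I would dispose of the case in which the test $f(x_{k+1})\leq\hat f_k$ succeeds infinitely often: each success gives $m_{k+1}\leq f(x_{k+1})\leq\hat f_k=m_k-\delta_k\leq m_k-\delta$, forcing $m_k\to-\infty$, which is absurd. So the test succeeds only finitely often; from that point on $\delta_{k+1}=\max\{\beta\delta_k,\delta\}$, and since $\beta<1$ this drives $\delta_k$ down to $\delta$, so there is $K$ with $\delta_k=\delta$ and $\hat f_k=m_k-\delta$ for all $k\geq K$. Now suppose, toward a contradiction, that $m_\infty>f^\star+\delta$. Fix $\varepsilon\in(0,m_\infty-f^\star-\delta)$ and invoke Lemma~\ref{lem_continuity} to pick $y$ with $f(y)\leq f^\star+\varepsilon$; I would note that its proof in fact yields $y\in\mathcal{X}\cap\inte\dom h$, because $y$ is taken on the half-open segment joining a point of $\mathcal{X}\cap\inte\dom h$ to $x^\star\in\overline{\dom h}$ (Assumption~\ref{assump_legendre}), and such a segment minus its endpoint $x^\star$ lies in $\inte\dom h$. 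Applying Lemma~\ref{lem_standard_result} with this $y$ and $\eta=\eta_k$, substituting $\eta_k=(f(x_k)-\hat f_k)/(c\norm{g(x_k)}_*^2)$, writing $f(x_k)-f(y)=\Delta_k+(\hat f_k-f(y))$ with $\hat f_k-f(y)\geq m_\infty-\delta-f^\star-\varepsilon\eqqcolon c'>0$ for $k\geq K$, and using $c>1/2$, a short computation yields for all $k\geq K$
\[
\underbrace{\frac{(2c-1)\Delta_k^2}{2c^2\,\norm{g(x_k)}_*^2}}_{\eqqcolon\, P_k}\;+\;c'\eta_k\;\leq\;D_h(y,x_k)-D_h(y,x_{k+1}).
\]
Telescoping over $k\geq K$ (using $D_h(y,\cdot)\geq0$) shows $\sum_{k\geq K}P_k<\infty$ and $D_h(y,x_k)\leq D_h(y,x_K)$ for all $k\geq K$.

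The final and crucial step is to convert the uniform bound on $D_h(y,x_k)$ into a uniform bound on $\norm{g(x_k)}_*$ — which is exactly where local, as opposed to global, Lipschitzness must be handled. By Assumption~\ref{assump_strong_convexity}, $\tfrac12\norm{x_k-y}^2\leq D_h(y,x_k)\leq D_h(y,x_K)$, so $(x_k)_{k\geq K}$ is bounded and its closure is compact; it is contained in $\inte\dom h$ by Corollary~\ref{cor_bauschke_borwein} (applicable since $y\in\inte\dom h$), and being closed it lies in $\mathcal{X}$ as well. Assumption~\ref{assump_bounded_gradient} then gives $\sup_k\norm{g(x_k)}_*\leq G<\infty$, whence, using $\Delta_k\geq\delta$, $P_k\geq(2c-1)\delta^2/(2c^2G^2)>0$ for every $k\geq K$, contradicting $\sum_{k\geq K}P_k<\infty$. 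Therefore $m_\infty=\inf_k f(x_k)\leq f^\star+\delta$.

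The main obstacle I anticipate is precisely this last bootstrap: there is no a priori bound on $\norm{g(x_k)}_*$, so one can only control it by first confining the iterates to a compact subset of $\inte\dom h$ via the decay of $D_h(y,x_k)$ — and for that one needs simultaneously that $D_h(y,\cdot)$ is monotone along the iterates (which is why the coefficient $\tfrac{2c-1}{2c^2}$ must be positive, hence $c>1/2$), that $y$ can be chosen in the interior of $\dom h$ (the strengthened form of Lemma~\ref{lem_continuity}), and that Legendreness gives the ``interior'' conclusion of Corollary~\ref{cor_bauschke_borwein}. The only other point requiring care is making sure the two branches of the $\delta_k$-update force $\delta_k$ to stabilize at $\delta$, which is where $\beta<1$, $\gamma\geq1$, and the initialization $\delta_1\geq\delta$ enter.
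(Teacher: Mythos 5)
Your proposal is correct and follows essentially the same route as the paper's proof: show the test $f(x_{k+1})\leq\hat f_k$ succeeds only finitely often so that $\delta_k$ stabilizes at $\delta$, pick a near-optimal comparison point in $\mathcal{X}\cap\inte\dom h$ via Lemma~\ref{lem_continuity}, use Lemma~\ref{lem_standard_result} to make $D_h(\cdot,x_k)$ monotone, bootstrap a uniform gradient bound through Corollary~\ref{cor_bauschke_borwein} and Assumption~\ref{assump_bounded_gradient}, and derive a contradiction from the telescoped sum against $\Delta_k\geq\delta$. The only cosmetic difference is that you retain the extra positive term $c'\eta_k$ in the decrement (the paper simply lower-bounds $f(x_k)-f(\tilde x)$ by $f(x_k)-\hat f_k$ and discards it), and you explicitly verify that Lemma~\ref{lem_continuity}'s point can be taken in $\inte\dom h$, a detail the paper uses implicitly.
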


Algorithm \ref{alg_first} requires deciding the error tolerance $\delta$ in advance and does not guarantee convergence to the exact minimum. 
Our second algorithm fixes the weaknesses. 

\subsection{Proof of Theorem \ref{thm_first}} 

Suppose, for contradiction, that 
\begin{equation}
\inf_{k \in \mathbb{N}} f ( x_k ) \geq f^\star + \delta + \varepsilon \label{eq_contradiction_1}
\end{equation}
for some $\varepsilon > 0$. 

\begin{lem} \label{lem_asymptotic_delta}
There is some $\tilde{k} \in \mathbb{N}$ such that $\delta_k = \delta$ for all $k \geq \tilde{k}$. 
\end{lem}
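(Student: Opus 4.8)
The goal is to show the $\delta_k$ sequence eventually stabilizes at the floor value $\delta$. The mechanism is that, under the contradiction hypothesis \eqref{eq_contradiction_1}, the test $f(x_{k+1}) \le \hat f_k$ on line 9 of Algorithm \ref{alg_first} can only succeed finitely often, so that after some point $\delta_k$ is always updated by the rule $\delta_{k+1} = \max\{\beta\delta_k,\delta\}$, and then geometric decay brings it down to $\delta$ in finitely many steps.

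The first step is to argue that the ``success'' branch ($f(x_{k+1}) \le \hat f_k$, giving $\delta_{k+1} = \gamma\delta_k$) occurs only finitely many times. Recall $\hat f_k = \min_{1\le\kappa\le k} f(x_\kappa) - \delta_k$ and, since $\delta_k \ge \delta$ always, we have $\hat f_k \le \min_{1\le\kappa\le k} f(x_\kappa) - \delta \le f(x_{k+1}) - \delta$ whenever the test were to succeed — wait, more carefully: if $f(x_{k+1}) \le \hat f_k$ then the running minimum strictly decreases, in fact $\min_{1\le\kappa\le k+1} f(x_\kappa) \le \hat f_k = \min_{1\le\kappa\le k} f(x_\kappa) - \delta_k \le \min_{1\le\kappa\le k} f(x_\kappa) - \delta$. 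So each success decreases the running minimum of the objective by at least $\delta$. But by \eqref{eq_contradiction_1} the running minimum is bounded below by $f^\star + \delta + \varepsilon$, while it starts at the finite value $f(x_1)$. Hence there can be at most $\lfloor (f(x_1) - f^\star - \delta - \varepsilon)/\delta \rfloor$ successes, a finite number; let $\tilde k_0$ be an index after the last one.

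For the second step, for all $k \ge \tilde k_0$ the update is $\delta_{k+1} = \max\{\beta\delta_k, \delta\}$. Since $0 < \beta < 1$, iterating gives $\delta_{k} \le \max\{\beta^{k - \tilde k_0}\delta_{\tilde k_0}, \delta\}$; because $\beta^{k-\tilde k_0}\delta_{\tilde k_0} \to 0$, there is some $\tilde k \ge \tilde k_0$ with $\beta^{\tilde k - \tilde k_0}\delta_{\tilde k_0} \le \delta$, and then $\delta_k = \delta$ for all $k \ge \tilde k$ (once it hits $\delta$, the max keeps it there, and it can never go below $\delta$ by construction). That is the claimed $\tilde k$.

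I do not expect a serious obstacle here; this is a bookkeeping argument. The one point requiring a little care is the bound ``each success decreases the running minimum by at least $\delta$'': one must check whether the success test uses $\hat f_k$ computed with the current $\delta_k \ge \delta$ (it does, per line 6 feeding line 9), and that $f(x_1)$ is finite so the number of decrements is genuinely finite — the latter uses only that $x_1 \in \mathcal X \cap \inte\dom h$, on which $f$ is finite by Assumptions \ref{assump_legendre} and \ref{assump_subdifferential}. If instead the running minimum could be $-\infty$ the argument would fail, but Assumption \ref{assump_finite} together with convexity rules that out. So the real content is just combining the finite-success observation with geometric decay.
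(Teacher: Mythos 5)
Your proof is correct and follows essentially the same route as the paper's: each success of the test $f(x_{k+1}) \le \hat f_k$ forces the running minimum down by at least $\delta$, so Assumption \ref{assump_finite} (the paper invokes this directly, whereas you invoke the lower bound from \eqref{eq_contradiction_1} — either works) allows only finitely many successes, after which the $\max\{\beta\delta_k,\delta\}$ update drives $\delta_k$ to $\delta$ geometrically. You in fact spell out the final geometric-decay step more explicitly than the paper does.
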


\begin{proof}
Suppose that $f ( x_{k + 1} ) \leq \hat{f}_k$ holds for infinitely many $k$'s. 
Then, since $\delta_k \geq \delta$ for all $k \in \mathbb{N}$, we have $f ( x_k ) \to - \infty$, violating Assumption \ref{assump_finite}. 
Therefore, $f ( x_{k + 1} ) \leq \hat{f}_k$ can only hold for finitely many $k$'s. 
This implies that Line 11 of Algorithm \ref{alg_first} must be executed infinitely many times. 
The lemma follows. 
\end{proof}

By the definition of $\hat{f}_k$, \eqref{eq_contradiction_1}, and Lemma \ref{lem_asymptotic_delta}, we have 
\[
\hat{f}_k \geq \inf_{k \in \mathbb{N}} f ( x_k ) - \delta_k \geq f^\star + \delta + \varepsilon - \delta = f^\star + \varepsilon , \quad \forall k \geq \tilde{k} . 
\]
By Lemma \ref{lem_continuity}, there is some $\tilde{x} \in \mathcal{X} \cap \inte \, \dom h$ such that 
\begin{equation}
\hat{f}_k \geq f ( \tilde{x} ) , \quad \forall k \geq \tilde{k} . \label{eq_tilde_x_1}
\end{equation}
Then, we write 
\begin{align}
D_h ( \tilde{x}, x_{k + 1} ) & \leq D_h ( \tilde{x}, x_k ) - \eta_k \left( f ( x_k ) - f ( \tilde{x} ) \right) + \frac{\eta_k ^ 2 \norm{ g ( x_k ) }_*^2 }{2} \notag \\
& \leq D_h ( \tilde{x}, x_k ) - \eta_k \left( f ( x_k ) - \hat{f}_k \right) + \frac{\eta_k ^ 2 \norm{ g ( x_k ) }_*^2 }{2} \notag \\
& = D_h ( \tilde{x}, x_k ) - \frac{ \left( f ( x_k ) - \hat{f}_k \right)^2 }{ c \norm{ g ( x_k ) }_*^2 } + \frac{\left( f ( x_k ) - \hat{f}_k \right)^2 }{2 c^ 2 \norm{ g ( x_k ) }_*^2} \notag \\
& = D_h ( \tilde{x}, x_k ) - \frac{1}{c} \left( 1 - \frac{1}{2 c} \right) \left( \frac{ f ( x_k ) - \hat{f}_k }{ \norm{ g ( x_k ) }_* } \right)^2 , \quad \forall k \geq \tilde{k} . \label{eq_divergence_bound}
\end{align}
In the above, the first inequality follows from Lemma \ref{lem_standard_result}; 
the second follows from \eqref{eq_tilde_x_1}; 
the third line follows from the definition of $\eta_k$. 

By \eqref{eq_divergence_bound}, the set $\set{ D_h ( \tilde{x}, x_k ) }_{k \in \mathbb{N}}$ is bounded. 
By the strong convexity of $h$ (Assumption \ref{assump_strong_convexity}), the set of iterates $\set{ x_k }_{k \in \mathbb{N}}$ is also bounded in $\mathcal{X} \cap \inte \, \dom h$. 
Hence, by Corollary \ref{cor_bauschke_borwein}, the closure of the set of iterates $\set{ x_k }_{k \in \mathbb{N}}$ is bounded in $\mathcal{X} \cap \inte\, \dom h$. 
Then, there is some $G > 0$ such that $\norm{ g ( x_k ) }_* \leq G$ for all $k \in \mathbb{N}$. 
The inequality \eqref{eq_divergence_bound} implies 
\[
D_h ( \tilde{x}, x_{k + 1} ) \leq D_h ( \tilde{x}, x_k ) - \frac{1}{c G^2} \left( 1 - \frac{1}{2 c} \right) \left( f ( x_k ) - \hat{f}_k \right)^2 , \quad \forall k \geq \tilde{k}
\]
A telescopic sum gives 
\[
\frac{1}{c G^2} \left( 1 - \frac{1}{2 c} \right) \sum_{k = \tilde{k}}^\infty \left( f ( x_k ) - \hat{f}_k \right)^2 \leq D_h ( \tilde{x}, x_{\tilde{k}} ) < \infty , 
\]
showing that $f ( x_k ) - \hat{f}_k \to 0$. 
Nevertheless, Lemma \ref{lem_asymptotic_delta} implies
\[
f ( x_k ) - \hat{f}_k = f ( x_k ) - \min_{\kappa \leq k} f ( \kappa ) + \delta \geq \delta , \quad \forall k \geq \tilde{k} , 
\]
a contradiction. 
The theorem follows. 

\section{Second Algorithm}

\subsection{Algorithm and Convergence Guarantee} \label{sec_second_alg}

Algorithm \ref{alg_second} presents the second algorithm we propose. 
The algorithm generalizes the subgradient level method analyzed by \citet{Goffin1999}. 

\begin{algorithm}[t]
\caption{Second Algorithm}
\label{alg_second}

\begin{algorithmic}[1]
\REQUIRE $\delta_1 > 0$, $\sigma_1 = 0$, $l = 1$, $k ( 1 ) = 1$, $B > 0$, $c > 1 / 2$, and $x_1 \in \mathcal{X} \cap \inte \dom h$. 
\FORALL{$k \in \mathbb{N}$}
\IF{$g(x_k) = 0$}
\STATE Return $x_k$ as a minimizer and terminate. 
\ENDIF
\STATE $\frec_k \leftarrow \min_{1 \leq \kappa \leq k} f ( x_\kappa )$
\IF{$f( x_k ) \leq \frec_{k ( l )} - ( 1 / 2 ) \delta_l$}	
\STATE $k ( l + 1 ) \leftarrow k$
\STATE $\sigma_k \leftarrow 0$
\STATE $\delta_{l + 1} \leftarrow \delta_l$
\STATE $l \leftarrow l + 1$
\ELSIF{$\sigma_k > B$}
\STATE $k ( l + 1 ) \leftarrow k$
\STATE $\sigma_k \leftarrow 0$
\STATE $\delta_{l + 1} \leftarrow ( 1 / 2 ) \delta_l$
\STATE $l \leftarrow l + 1$
\ENDIF
\STATE $\hat{f}_k \leftarrow \frec_{k (l)} - \delta_l$
\STATE $\eta_k \leftarrow \frac{ f ( x_k ) - \hat{f}_k }{ c \norm{ g ( x_k ) }_*^2 }$. 
\STATE $x_{k + 1} \leftarrow T ( x_k; \eta_k )$. 
\STATE $\sigma_{k + 1} \leftarrow \sigma_k + c \eta_k \norm{ g ( x_k ) }_*$. 
\ENDFOR
\end{algorithmic}

\end{algorithm}

Algorithm \ref{alg_second} is a direct adaptation of the subgradient level method to the mirror descent setup, except for Line 20. 
Line 1--17 are exactly the same as the corresponding parts in the subgradient level method. 
Line 18 generalizes the Polyak-type step size with a general norm, as in Algorithm \ref{alg_first}. 
Line 19 replaces a gradient descent step with a mirror descent step. 
Line 20 is the major difference. 
In the original subgradient level method, $\sigma_k$ is the sum of the Euclidean distances between consecutive iterates; 
in Line 20, $\sigma_k$ becomes the sum of the magnitudes of the gradients scaled by $c \eta_k$. 
The two definitions of $\sigma_k$ coincide, up to a scaling factor $c$, in the unconstrained optimization setup \citet{Goffin1999} considered, but are obviously different in general. 

\begin{thm} \label{thm_second}
If Assumptions 1--5 hold, then Algorithm \ref{alg_second} satisfies
\[
\inf_{k \in \mathbb{N}} f ( x_k ) = f ^\star . 
\]
\end{thm}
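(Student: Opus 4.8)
The plan is to argue by contradiction, mirroring the structure of the proof of Theorem \ref{thm_first} but tracking the counter index $l$ that records how often $\delta_l$ has been halved. Suppose $\inf_{k} f(x_k) \geq f^\star + \varepsilon$ for some $\varepsilon > 0$. The key dichotomy is on whether $l \to \infty$, i.e., whether $\delta_l$ is halved infinitely often. If $\delta_l \to 0$, then (since each halving is triggered either by a genuine decrease of the running minimum $\frec$ by at least $(1/2)\delta_l$, or by the ``travel budget'' $\sigma_k$ exceeding $B$) we want to show $\frec_k$ must drop below $f^\star + \varepsilon$; this is the ``good'' case and should contradict the assumption directly. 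If instead $l$ stabilizes at some final value $\bar l$ after iteration $k(\bar l) =: \bar k$, then $\hat f_k = \frec_{\bar k} - \delta_{\bar l}$ is constant for all $k \geq \bar k$, the trigger in Line 6 never fires again (so $f(x_k) > \frec_{\bar k} - (1/2)\delta_{\bar l}$ for all $k \geq \bar k$, meaning the running minimum is frozen at $\frec_{\bar k}$), and $\sigma_k \leq B$ for all $k \geq \bar k$.

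In the stabilized case I would run the familiar Bregman argument. First use Lemma \ref{lem_continuity} together with the assumption \eqref{eq_contradiction_1}-analogue to pick $\tilde x \in \mathcal{X} \cap \inte\dom h$ with $f(\tilde x) \leq \hat f_k = \frec_{\bar k} - \delta_{\bar l}$ for all $k \geq \bar k$ — this requires $\frec_{\bar k} - \delta_{\bar l} \geq f^\star$, which holds because $\frec_{\bar k} \geq f^\star + \varepsilon$ as long as $\delta_{\bar l} \leq \varepsilon$; to secure the latter one may need to note that the very first halving already happened or, if $\bar l = 1$, handle $\delta_1$ by choosing $\varepsilon$ appropriately or observing that Line 6 not firing forces $f(x_k) - \hat f_k \geq (1/2)\delta_{\bar l} > 0$. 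Then Lemma \ref{lem_standard_result} gives, exactly as in \eqref{eq_divergence_bound},
\[
D_h(\tilde x, x_{k+1}) \leq D_h(\tilde x, x_k) - \frac{1}{c}\Bigl(1 - \frac{1}{2c}\Bigr)\Bigl(\frac{f(x_k) - \hat f_k}{\norm{g(x_k)}_*}\Bigr)^2, \quad \forall k \geq \bar k,
\]
so $\{D_h(\tilde x, x_k)\}$ is bounded, Corollary \ref{cor_bauschke_borwein} and Assumption \ref{assump_strong_convexity} confine the iterates to a compact subset of $\mathcal{X} \cap \inte\dom h$, and Assumption \ref{assump_bounded_gradient} yields $\norm{g(x_k)}_* \leq G$. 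Now I diverge from Theorem \ref{thm_first}'s proof: rather than only concluding $f(x_k) - \hat f_k \to 0$, I would bound $\eta_k \norm{g(x_k)}_* = (f(x_k)-\hat f_k)/(c\norm{g(x_k)}_*) \geq (f(x_k)-\hat f_k)/(cG)$, so
\[
\sum_{k \geq \bar k} c\,\eta_k \norm{g(x_k)}_* = \sum_{k \geq \bar k} \frac{f(x_k)-\hat f_k}{\norm{g(x_k)}_*} \geq \frac{1}{G}\sum_{k \geq \bar k}(f(x_k)-\hat f_k),
\]
and combining with the telescoped inequality above (which shows $\sum (f(x_k)-\hat f_k)^2 < \infty$) together with the uniform lower bound $f(x_k) - \hat f_k \geq (1/2)\delta_{\bar l}$ forces $\sigma_k = \sum_{j=\bar k}^{k-1} c\eta_j\norm{g(x_j)}_* \to \infty$, contradicting $\sigma_k \leq B$.

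The main obstacle I anticipate is the $l \to \infty$ case: making rigorous that infinitely many halvings are incompatible with $\frec_k$ staying above $f^\star + \varepsilon$. The subtlety is that a halving can be triggered by the $\sigma_k > B$ branch without any decrease in $\frec$, so one cannot simply say ``each halving lowers $\frec$.'' The right argument is that between two consecutive halvings caused by the $\sigma$-branch, the ``travel budget'' $\sigma$ accumulates past $B$ while the step sizes satisfy $c\eta_k\norm{g(x_k)}_* \geq (f(x_k)-\hat f_k)/G \geq (1/2)\delta_l/G$ on a compact region — but establishing that compact region and the gradient bound $G$ now requires knowing the iterates stay bounded even though $\delta_l$ is shrinking, which is where one must carefully invoke a Bregman telescoping inequality that holds uniformly across the index changes (using $\hat f_k \geq f(\tilde x)$ for a single $\tilde x$ valid once $\delta_l \leq \varepsilon$). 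I would handle this by showing that once $\delta_l < \varepsilon$, the inequality $D_h(\tilde x, x_{k+1}) \leq D_h(\tilde x, x_k) - (\text{nonneg})$ persists through all subsequent iterations regardless of index resets (since $\sigma_k$, $k(l)$, $\delta_l$ only enter through $\hat f_k$, and $\hat f_k = \frec_{k(l)} - \delta_l \geq f^\star \geq f(\tilde x)$ throughout), giving a global compact region, a global $G$, and then the $\sigma$-branch can fire only finitely often — so eventually all halvings come from genuine $\frec$ decreases, which drives $\frec_k \to -\infty$ or below $f^\star + \varepsilon$, the desired contradiction. Assembling these two cases and verifying the bookkeeping on when ``$\delta_l \leq \varepsilon$'' kicks in is the delicate part; everything else is the standard mirror-descent estimate already packaged in Lemmas \ref{lem_standard_result}, \ref{lem_continuity} and Corollary \ref{cor_bauschke_borwein}.
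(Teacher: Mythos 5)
Your overall strategy---contradiction, a case split on the behaviour of the index $l$, and Bregman telescoping against a surrogate point $\tilde x$ supplied by Lemma \ref{lem_continuity}---is the same as the paper's, which packages the two cases as Lemma \ref{lem_contradiction_1} ($\delta_l \to 0$) and Lemma \ref{lem_contradiction_2} ($\delta_l \not\to 0$). However, there is a genuine gap in your stabilized case, and you have put your finger on it yourself without repairing it. To run the \eqref{eq_divergence_bound}-style estimate you need $f(\tilde x) \leq \hat f_k = \frec_{k(\bar l)} - \delta_{\bar l}$, hence $\frec_{k(\bar l)} \geq f^\star + \delta_{\bar l}$; this fails whenever $\delta_{\bar l} > \varepsilon$, for instance when the user picks $\delta_1$ large and the $\sigma$-branch never fires, so that $\hat f_k$ sits strictly below $f^\star$ and no admissible $\tilde x$ exists. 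None of your suggested patches works: $\varepsilon$ is dictated by the contradiction hypothesis and cannot be ``chosen appropriately,'' and the bound $f(x_k) - \hat f_k \geq (1/2)\delta_{\bar l}$ says nothing about the sign of $\hat f_k - f(\tilde x)$. The paper's Lemma \ref{lem_contradiction_1} sidesteps this entirely: it uses only $f(\tilde x) \leq \inf_k f(x_k)$, so the term $-\eta_\kappa \left( f(x_\kappa) - f(\tilde x) \right)$ is nonpositive and can simply be dropped, and boundedness of $D_h(\tilde x, x_k)$ comes instead from the travel budget, $\sum_\kappa s_\kappa^2 \leq ( \sum_\kappa s_\kappa )^2 \leq B^2$, which is exactly what stabilization of $l$ hands you ($\sigma_k \leq B$ after the last reset). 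With boundedness, hence a gradient bound $G$, in hand, the lower bound $s_k > \delta_{\bar l}/(2G)$ contradicts $\sum_k s_k \leq B$ directly. You should rebuild this case on the budget rather than on $\hat f_k \geq f(\tilde x)$.

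A secondary bookkeeping issue: your dichotomy identifies ``$l \to \infty$'' with ``$\delta_l$ is halved infinitely often,'' but Lines 6--10 increment $l$ \emph{without} halving $\delta_l$, so there is a third regime ($l \to \infty$, only finitely many $\sigma$-branch firings, $\delta_l \to \delta_\infty > 0$) in which your ``once $\delta_l < \varepsilon$'' machinery never activates. That regime must be eliminated separately by noting that Line 6 then fires infinitely often, decreasing $\frec$ by at least $(1/2)\delta_\infty$ each time and driving $f$ to $-\infty$ against Assumption \ref{assump_finite}; you gesture at this, but it needs to be an explicit case, as it is at the end of the paper's Lemma \ref{lem_contradiction_1}. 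Your $\delta_l \to 0$ case otherwise reproduces the paper's Lemma \ref{lem_contradiction_2}, and your observation that the summability of $(f(x_k)-\hat f_k)^2$ against the uniform lower bound $(1/2)\delta_{\bar l}$ already yields a contradiction (making the $\sigma_k \to \infty$ step redundant) is correct.
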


\subsection{Proof of Theorem \ref{thm_second}}

%\begin{lemma} \label{lem_strong_convexity}
%It holds that 
%\[
%\braket{ \nabla h ( x_{t + 1} ) - \nabla h ( x_t ), x_{t + 1} - x_t } \geq \norm{ x_{t + 1} - x_t } ^ 2 . 
%\]
%\end{lemma}
%
%\begin{proof}
%By the strong convexity of $h$ (Assumption \ref{assump_strong_convexity}), we write 
%\begin{gather*}
%h ( x_{t + 1} ) - h ( x_t ) - \braket{ \nabla h ( x_t ), x_{t + 1} - x_t } \geq \frac{1}{2} \norm{ x_{t + 1} - x_t } ^ 2 , \\
%h ( x_t ) - h ( x_{t + 1} ) - \braket{ \nabla h ( x_{t + 1} ), x_t - x_{t + 1} } \geq \frac{1}{2} \norm{ x_t - x_{t + 1} } ^ 2
%\end{gather*}
%Summing up the two inequalities, the lemma follows. 
%\end{proof}

% The proof structure is inspired but not exactly the same as that by \citet{Goffin1999} (see also \citet[Theorem 3.7]{DAntonio2009}). 
We will prove by contradiction: 
If $\inf_{k \in \mathbb{N}} f ( x_k ) > f^\star$, then Lemma \ref{lem_contradiction_1} and Lemma \ref{lem_contradiction_2} below show that $\delta_l \to 0$ and $\delta_l$ bounded away from zero, respectively.
Therefore, the desired convergence must hold. 

\begin{lem} \label{lem_contradiction_1}
If $\inf_{k \in \mathbb{N}} f ( x_k ) > f^\star$, then $l \to \infty$ and $\delta_l \to 0$. 
\end{lem}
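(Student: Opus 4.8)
The plan is to argue that the algorithm cannot get stuck forever at a fixed level $l$, so that $l \to \infty$, and then that among the infinitely many level changes only finitely many can be of the ``first type'' (Lines 7--10, which keep $\delta_{l+1} = \delta_l$), forcing infinitely many halvings and hence $\delta_l \to 0$. First I would suppose toward a contradiction that $l$ stabilizes at some final value after iteration $k_0$; then for all $k \ge k_0$ neither branch of the outer \textbf{if} fires, so in particular $\sigma_k \le B$ for all $k$, and $\hat f_k = \frec_{k(l)} - \delta_l$ is a fixed constant. Since $\inf_k f(x_k) > f^\star$, pick $\varepsilon > 0$ with $\inf_k f(x_k) \ge f^\star + \varepsilon$; using Lemma~\ref{lem_continuity} choose $\tilde x \in \mathcal{X} \cap \inte\dom h$ with $f(\tilde x) \le \hat f_k$ for the (fixed) value of $\hat f_k$ — this is where I need $\hat f_k$ to sit above $f^\star$, which holds because $\frec_{k(l)} \ge f^\star$ and I only need $\delta_l$ small relative to $\varepsilon$; if $\delta_l$ happens to be large the chosen $\tilde x$ argument needs the genuine gap, so more carefully I take $\tilde x$ with $f(\tilde x) \le f^\star + \min\{\varepsilon,\delta_l\}/2 \le \hat f_k$ only if $\hat f_k \ge f^\star$, which may fail; I would instead run the descent-inequality argument with $f(\tilde x)$ replaced by $f^\star + (\text{small})$ and absorb the slack. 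Concretely, Lemma~\ref{lem_standard_result} with $y = \tilde x$ gives
\[
D_h(\tilde x, x_{k+1}) \le D_h(\tilde x, x_k) - \eta_k\bigl(f(x_k) - f(\tilde x)\bigr) + \frac{\eta_k^2 \norm{g(x_k)}_*^2}{2}.
\]

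Next I would bound the iterates: as in the proof of Theorem~\ref{thm_first}, either $f(x_k) - \hat f_k$ is eventually small enough that the telescoped inequality keeps $D_h(\tilde x, x_k)$ bounded, or it is bounded below by a positive constant and the telescoping still works because $f(x_k) - f(\tilde x) \ge$ a positive multiple of $f(x_k) - \hat f_k$ once $\delta_l$ is small. Either way $\set{D_h(\tilde x, x_k)}_k$ is bounded, so by Assumption~\ref{assump_strong_convexity} and Corollary~\ref{cor_bauschke_borwein} the closure of $\set{x_k}_k$ lies in $\mathcal{X} \cap \inte\dom h$, whence $\norm{g(x_k)}_* \le G$ for some $G$ by Assumption~\ref{assump_bounded_gradient}. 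Then
\[
c\eta_k \norm{g(x_k)}_* = \frac{f(x_k) - \hat f_k}{\norm{g(x_k)}_*} \ge \frac{\delta_l/2}{G} > 0
\]
since while level $l$ is active we have $f(x_k) > \frec_{k(l)} - \tfrac12\delta_l$, i.e. $f(x_k) - \hat f_k > \tfrac12\delta_l$. Summing, $\sigma_k \to \infty$, contradicting $\sigma_k \le B$. Hence $l \to \infty$.

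Finally, to get $\delta_l \to 0$ I note that each level change is either a ``record improvement'' of type Line~7 (the condition $f(x_k) \le \frec_{k(l)} - \tfrac12\delta_l$) or a ``$\sigma$ overflow'' of type Line~11. Type-7 changes strictly decrease $\frec$ by at least $\tfrac12\delta_l \ge \tfrac12\delta_{\min}$ if $\delta_l$ were bounded below; but since $\inf_k f(x_k) > f^\star > -\infty$, $\frec_k$ is bounded below, so only finitely many type-7 changes occur. Therefore all but finitely many of the infinitely many level increments are type-11 halvings, and $\delta_{l+1} = \tfrac12\delta_l$ infinitely often forces $\delta_l \to 0$. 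The main obstacle is the boundedness-of-iterates step under the contradiction hypothesis: I must ensure the telescoped Bregman inequality yields a finite bound even though $f(x_k) - \hat f_k$ need not tend to zero here (unlike in Theorem~\ref{thm_first}); the resolution is that $\eta_k(f(x_k) - f(\tilde x)) - \tfrac12\eta_k^2\norm{g(x_k)}_*^2$ is nonnegative once $f(\tilde x)$ is chosen close enough to $f^\star$ and $c > 1/2$, so $D_h(\tilde x, x_k)$ is nonincreasing along level $l$, which is all that is needed.
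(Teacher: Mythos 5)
Your overall architecture matches the paper's: assume $l$ stabilizes, derive a contradiction from $\sigma_k \le B$, then rule out $\delta_\infty > 0$ by observing that infinitely many record-improvement resets would drive $\frec_k$ to $-\infty$. The final contradiction (a per-step lower bound $s_k \ge \delta_l/(2G)$ against $\sum_k s_k \le B$) and the $\delta_l \to 0$ part are both sound.

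The gap is in how you obtain the gradient bound $G$, i.e., in showing that $\{ D_h(\tilde x, x_k) \}_{k}$ is bounded. Your mechanism is to make $D_h(\tilde x, \cdot)$ non-increasing along the iterates, which requires $f(\tilde x) \le \hat f_k$; but $\hat f_k = \frec_{k(l)} - \delta_l$ can lie strictly below $f^\star$ (e.g., when $\delta_1$ is large), in which case no feasible $\tilde x$ satisfies this --- a difficulty you flag yourself. Your fallback of ``replacing $f(\tilde x)$ by $f^\star + (\text{small})$ and absorbing the slack'' does not close it: the per-step slack is $\eta_k \bigl( f(\tilde x) - \hat f_k \bigr)$, and $\sum_k \eta_k$ is not controlled by $\sum_k s_k \le B$ unless $\norm{g(x_k)}_*$ is already bounded, which is precisely what you are trying to establish. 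The paper's resolution needs no comparison between $f(\tilde x)$ and $\hat f_k$ at all: it picks $\tilde x$ with $f(\tilde x) \le \inf_k f(x_k)$ (possible by Lemma~\ref{lem_continuity} exactly because of the hypothesis $\inf_k f(x_k) > f^\star$), so in the telescoped form of Lemma~\ref{lem_standard_result} every linear term $-\eta_\kappa \bigl( f(x_\kappa) - f(\tilde x) \bigr)$ is non-positive and can simply be dropped, while the accumulated quadratic terms obey
\[
\frac{1}{2 c^2} \sum_{\kappa} s_\kappa^2 \le \frac{1}{2 c^2} \Bigl( \sum_{\kappa} s_\kappa \Bigr)^{2} \le \frac{B^2}{2 c^2} ,
\]
using the very bound $\sum_{k > \tilde k} s_k \le B$ that you already extracted from the fact that the $\sigma$-branch never fires. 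This gives $D_h(\tilde x, x_{k+1}) \le D_h(\tilde x, x_{\tilde k + 1}) + B^2 / (2 c^2)$ with no monotonicity claim, after which the rest of your argument goes through.
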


\begin{proof}
By Lemma \ref{lem_continuity}, there is some point $\tilde{x} \in \mathcal{X} \cap \inte \, \dom h$ near $x^\star$ such that 
\begin{equation}
\inf_{k \in \mathbb{N}} f ( x_k ) \geq f ( \tilde{x} ) . \label{eq_tilde_x_2}
\end{equation}
Now we prove $l \to \infty$ by contradiction. 
% By the monotone convergence theorem, the sequence $( \delta_l )_{l \in \mathbb{N}}$ converges. 
Suppose that $l$ is always bounded above by some $\overline{l} \in \mathbb{N}$. 
%\[
%\delta_\infty \coloneqq \lim_{l \to \infty} \delta_l > 0 . 
%\]
Define $s_k \coloneqq c \eta_k \norm{ g ( x_k ) }_*$. 
By Line 12--15 in Algorithm \ref{alg_second}, we have 
\begin{equation}
\delta_l \geq \underline{\delta} , \quad \forall 1 \leq l \leq \overline{l} , \label{eq_bounded_delta}
\end{equation}
for some $\underline{\delta} > 0$, and there is some $\tilde{k} \in \mathbb{N}$ such that 
% Since $\delta_l$ is decreased only when $\sigma_k > B$ (Line 8), there is some $\tilde{k} \in \mathbb{N}$ such that 
\begin{equation}
\sum_{k = \tilde{k} + 1}^\infty s_k \leq B . \label{eq_B}
\end{equation}
Iteratively applying Lemma \ref{lem_standard_result}, we write 
\begin{align*}
D_h ( \tilde{x}, x_{k + 1} ) & \leq D_h ( \tilde{x}, x_k ) - \eta_k \left( f ( x_k ) - f ( \tilde{x} ) \right) + \frac{s_k^2}{2 c^2} \\
& \leq D_h ( \tilde{x}, x_{\tilde{k} + 1} ) - \sum_{\kappa = \tilde{k} + 1}^k \eta_\kappa \left( f ( x_\kappa ) - f ( \tilde{x} ) \right) + \frac{1}{2 c^2} \sum_{\kappa = \tilde{k} + 1}^k s_\kappa ^ 2 \\
& \leq D_h ( \tilde{x}, x_{\tilde{k} + 1} ) + \frac{B^2}{2 c^2} , \quad \forall k > \tilde{k} , 
\end{align*}
where the last inequality follows from \eqref{eq_tilde_x_2} and the fact that $\sum_k s_k^2 \leq \left( \sum_k s_k \right)^2$. 
Therefore, $\set{ D_h ( \tilde{x}, x_k ) }_{k \in \mathbb{N}}$ is bounded. 
By the strong convexity of $h$ (Assumption \ref{assump_strong_convexity}), the set $\set{ x_k }_{k \in \mathbb{N}}$ is also bounded. 
Corollary \ref{cor_bauschke_borwein} then ensures that the closure of $\set{ x_k }_{k \in \mathbb{N}}$ is bounded on the interior of $\dom h$. 
By Assumption \ref{assump_bounded_gradient}, we have $\norm{ g ( x_k ) }_* \leq G$ for some $G > 0$ for all $k \in \mathbb{N}$. 

The inequality \eqref{eq_B} also implies that $\eta_k \norm{ g ( x_k ) }_* \to 0$. 
By the definition of $\eta_k$, we write 
\begin{align*}
\eta_k \norm{ g ( x_k ) }_* & = \frac{f ( x_k ) - \min_{\kappa \leq k} f ( x_\kappa ) + \delta_l}{c \norm{ \nabla f ( x_k ) }_*} \\
& \geq \frac{f ( x_k ) - \min_{\kappa \leq k} f ( x_\kappa ) + \delta_l}{c G} , 
\end{align*}
showing that 
\[
\lim_{k \to \infty} f ( x_k ) - \min_{\kappa \leq k} f ( x_\kappa ) + \delta_l = 0. 
\]
Then, by \eqref{eq_bounded_delta}, for large enough $k$ we have
\[
f ( x_k ) - \min_{\kappa \leq k} f ( x_\kappa ) + \underline{\delta} \leq f ( x_k ) - \min_{\kappa \leq k} f ( x_\kappa ) + \delta_l \leq \frac{1}{2} \underline{\delta} ,  
\]
contradicting the fact that $f ( x_k ) \geq \min_{\kappa \leq k} f ( x_\kappa )$. 

Now we have proved that $l \to \infty$. 
By the monotone convergence theorem, $\delta_l \to \delta_\infty$ for some $\delta_\infty \geq 0$. 
Suppose that $\delta_\infty > 0$. 
Then, Line 11--15 cannot be executed infinitely many times; 
this together with $l \to \infty$ implies that Line 6--10 is executed infinitely many times.
Then, $f ( x_k ) \to \infty$, violating Assumption \ref{assump_finite}. 
% $\frec_{k (l)} \to - \infty$, violating Lemma \ref{lem_continuity}. 
Therefore, we have $\delta_\infty = 0$. 
\end{proof}

\begin{rem}
The lemma is inspired by the analysis of \citet[Lemma 2]{Goffin1999}. 
If we can choose $\tilde{x} = x^\star$ in the proof above, which is the case in the analysis of \citet{Goffin1999}, then there is no need to assume $\inf_{k \in \mathbb{N}} f ( x_k ) > f^\star$. 
Nevertheless, $x^\star$ may lie on the boundary of $\dom h$, which hinders us from using Theorem \ref{cor_bauschke_borwein}. 
See the footnote before Lemma \ref{lem_continuity} for an example. 
%For example, consider minimizing a linear function on the probability simplex by entropic mirror descent. 
%Then, $h$ is the negative Shannon entropy. 
%It can happen that the minimizer is unique and sparse, whereas a sparse vector does not lie in the interior of $\dom h$. 
\end{rem}

\begin{lem} \label{lem_contradiction_2}
If $\inf_{k \in \mathbb{N}} f ( x_k ) > f^\star$, then $\delta_l \to \delta_\infty$ for some $\delta_\infty > 0$. 
%the sequence $(\delta_l)_{l \in \mathbb{N}}$ converges to $0$. 
\end{lem}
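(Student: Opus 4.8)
I want to show that under the hypothesis $\inf_k f(x_k) > f^\star$, the counter $\delta_l$ cannot decrease to $0$. Since $\delta_{l+1}$ is only halved when the ``level-too-long'' branch (Line 11--15) fires, the strategy is to prove that this branch fires only finitely often; equivalently, that at some point the sequence stays inside a single ``level'' forever, so $\delta_l$ freezes at a positive value. The key quantity is $\sigma_k$, which accumulates $s_\kappa = c\eta_\kappa \norm{g(x_\kappa)}_*$ within the current level; the halving branch fires precisely when this accumulated sum exceeds $B$. So I must show that, eventually, the accumulated $\sigma$ within each level stays bounded by $B$ — or more directly, derive a contradiction from assuming $\delta_l \to 0$.

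\textbf{Main line of argument.} Assume, for contradiction, that $\delta_l \to 0$ (which Lemma~\ref{lem_contradiction_1} already gives us, so in fact I am combining with Lemma~\ref{lem_contradiction_1} to reach the overall contradiction). By Lemma~\ref{lem_continuity} pick $\tilde{x} \in \mathcal{X} \cap \inte\dom h$ with $\inf_k f(x_k) \geq f(\tilde x) + \varepsilon$ for some $\varepsilon > 0$, using the strict inequality hypothesis. The first step is to control $D_h(\tilde x, x_k)$: applying Lemma~\ref{lem_standard_result} with $y = \tilde x$,
\[
D_h(\tilde x, x_{k+1}) \leq D_h(\tilde x, x_k) - \eta_k\bigl(f(x_k) - f(\tilde x)\bigr) + \frac{\eta_k^2 \norm{g(x_k)}_*^2}{2}.
\]
Since $\hat f_k = \frec_{k(l)} - \delta_l \geq f^\star$ for $\delta_l$ small (using $\frec_{k(l)} \geq \inf_k f(x_k) \geq f^\star + \varepsilon$ and $\delta_l \to 0$), we get $f(x_k) - f(\tilde x) \geq f(x_k) - \hat f_k + \varepsilon'$ for suitable $\varepsilon' > 0$ eventually; substituting the definition of $\eta_k$ this yields, for large $k$,
\[
D_h(\tilde x, x_{k+1}) \leq D_h(\tilde x, x_k) - \frac{\varepsilon'}{cG}\, s_k,
\]
where I first need to establish $\norm{g(x_k)}_* \leq G$ via the boundedness argument (Corollary~\ref{cor_bauschke_borwein}) exactly as in the previous proofs — note $\{D_h(\tilde x, x_k)\}$ is monotonically nonincreasing hence bounded once we reach this regime, which bootstraps the gradient bound. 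Telescoping then forces $\sum_k s_k < \infty$.

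\textbf{Closing the contradiction.} Once $\sum_k s_k < \infty$, the total mass of $\sigma$ accumulated over all levels from some point on is finite; in particular, for $l$ large enough, the running sum $\sigma_k$ within level $l$ never reaches $B$, so the branch in Line~11--15 is never triggered after that point. Hence $\delta_l$ is never halved after some index, so $\delta_l \to \delta_\infty$ with $\delta_\infty > 0$ (the limit of a sequence that is eventually constant, or at worst eventually nonincreasing only via halvings that have stopped). This is the claimed statement. I expect the \textbf{main obstacle} to be the bootstrapping order: establishing the gradient bound $G$ requires boundedness of $\{D_h(\tilde x, x_k)\}$, which in turn requires the descent inequality that uses $G$. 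The clean way around this is to first argue that once $\delta_l$ is small enough the quantity $f(x_k) - \hat f_k \geq \delta_l > 0$ keeps the ``descent term'' $\eta_k(f(x_k) - f(\tilde x))$ at least as large as $\frac{\eta_k^2\norm{g(x_k)}_*^2}{2}$ (by the $c > 1/2$ slack, exactly as in \eqref{eq_divergence_bound}), so $D_h(\tilde x, x_{k+1}) \leq D_h(\tilde x, x_k)$ holds unconditionally on $G$ for large $k$; then boundedness of $D_h$ and hence of the iterates follows, then $G$ follows, and only then do I extract $\sum_k s_k < \infty$ from the strictly-positive $\varepsilon'$ gap. A secondary subtlety is making sure the ``eventually'' thresholds (how small $\delta_l$ must be, how large $k$ must be) are chosen consistently; since $\delta_l \to 0$ monotonically through halvings this is routine.
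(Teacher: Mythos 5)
Your proposal is correct and follows essentially the same route as the paper: pick $\tilde{x}$ with $f(\tilde{x})$ strictly below $\lim_k \hat{f}_k$ (which exceeds $f^\star$ because $\frec_{k(l)}$ is bounded away from $f^\star$ and $\delta_l \to 0$ by Lemma \ref{lem_contradiction_1}), derive the strict decrease $D_h(\tilde{x}, x_{k+1}) \leq D_h(\tilde{x}, x_k) - \Omega(\varepsilon s_k)$, bootstrap the gradient bound $G$ from monotonicity of $D_h(\tilde{x},\cdot)$, telescope to get $\sum_k s_k < \infty$, and conclude the halving branch fires only finitely often. The bootstrapping subtlety you flag (descent first without $G$, then $G$, then summability) is exactly how the paper's chain of inequalities in \eqref{eq_to_be_telescoped} is organized.
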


\begin{proof}
Notice that $\hat{f}_k$ and $\frec_{k (l)}$ are decreasing and bounded from below by $f^\star$. 
By the monotone convergence theorem, both $\lim_{k \to \infty} \hat{f}_k$ and $\lim_{k \to \infty} \frec_{k (l)}$ exist. 
Also notice that $\lim_{k \to \infty} \frec_{k (l)} > f^\star$; 
otherwise, we have $\inf_{k \in \mathbb{N}} f ( x_k ) = f^\star$. 
Since $\delta_l \to 0$ by Lemma \ref{lem_contradiction_1}, we write 
\begin{align*}
\lim_{k \to \infty} \hat{f}_k & = \lim_{k \to \infty} \left( \frec_{k (l)} - \delta_l \right) \\
& = \lim_{k \to \infty} \frec_{k (l)} \\
% & \geq f^\star + 2 \varepsilon 
& > f^\star . 
\end{align*}
% for some $\varepsilon > 0$. 
%Since $\frec_k$ records the smallest function value on the iterates, we have 
%\[
%\inf_{k \in \mathbb{N}} \frec_k = \inf_{k \in \mathbb{N}} f ( x_k ) \geq f^\star + \varepsilon 
%\] 
%for some $\varepsilon > 0$. 
%By Lemma \ref{lem_contradiction_1} and Lemma \ref{lem_appendix} in the appendix, we write 
%\begin{align*}
%\liminf_{k \to \infty} \hat{f}_k & = \liminf_{k \to \infty} \frec_{k (l)} - \delta_l \\
%& \geq \liminf_{l \to \infty} \frec_{k (l)} + \liminf_{l \to \infty} - \delta_l \\
%& = \liminf_{l \to \infty} \frec_{k (l)} \\
%& \geq f^\star + \varepsilon . 
%\end{align*}
Similarly as in the proof of Lemma \ref{lem_contradiction_1}, there exist some $\varepsilon > 0$ and $\tilde{x} \in \mathcal{X} \cap \inte \, \dom h$ and $\tilde{k} \in \mathbb{N}$ such that\footnote{Notice that there is a slight abuse of notations: the $\tilde{x}$ and $\tilde{k}$ here are different from those in the proof of Lemma \ref{lem_contradiction_1}.} 
\begin{equation}
f ( x_k ) > f ( \tilde{x} ) \text{~ and ~} \hat{f}_k \geq f ( \tilde{x} ) + \varepsilon , \quad \forall k \geq \tilde{k} . \label{eq_tilde_x}
\end{equation}
% Since $f ( x_k ) > \hat{f}_k$ by the definition of $\hat{f}_k$, we have $f (  )$
Then, we have 
\begin{equation}
s_k \coloneqq c \eta_k \norm{ g ( x_k ) }_* = \frac{ f ( x_k ) - \hat{f}_k }{\norm{ g ( x_k ) }_*} \leq \frac{ f ( x_k ) - f ( \tilde{x} ) - \varepsilon }{ \norm{ g ( x_k ) }_* } , \quad \forall k \geq \tilde{k} . \label{eq_sk}
\end{equation}
We write 
\begin{align}
D_h ( \tilde{x}, x_{k + 1} ) & \leq D_h ( \tilde{x}, x_k ) - \eta_k \left( f ( x_k ) - f ( \tilde{x} ) \right) + \frac{s_k^2}{2 c^2} \notag \\
& = D_h ( \tilde{x}, x_k ) - \frac{s_k \left( f ( x_k ) - f ( \tilde{x} ) \right) }{c \norm{ g ( x_k ) }_*} + \frac{s_k^2}{2 c^2} \notag  \\
& \leq D_h ( \tilde{x}, x_k ) - \frac{s_k \left( f ( x_k ) - f ( \tilde{x} ) \right) }{c \norm{ g ( x_k ) }_*} + \frac{s_k \left( f ( x_k ) - f ( \tilde{x} ) - \varepsilon \right)}{2 c^2 \norm{ g ( x_k ) }_*} \notag \\
& = D_h ( \tilde{x}, x_k ) - \left( c - \frac{1}{2} \right) \frac{s_k \left( f ( x_k ) - f ( \tilde{x} ) \right)}{c^2 \norm{ g ( x_k ) }_*} - \frac{ s_k \varepsilon }{ 2 c^2 \norm{ g ( x_k ) }_*} \notag \\
& \leq D_h ( \tilde{x}, x_k ) - \frac{ s_k \varepsilon }{ 2 c^2 \norm{ g ( x_k ) }_*} , \quad \forall k \geq \tilde{k} .  \label{eq_to_be_telescoped}
\end{align}
In the above, the first inequality follows from Lemma \ref{lem_standard_result}; 
the second inequality follows from \eqref{eq_sk}; 
the last follows from \eqref{eq_tilde_x}. 
Therefore, $D_h ( \tilde{x}, x_k )$ is strictly decreasing.
Similarly as in the proof of Lemma \ref{lem_contradiction_1}, we conclude that $\norm{ g ( x_k ) }_* \leq G$ for some $G > 0$. 
Summing the inequality \eqref{eq_to_be_telescoped} for $k \geq \tilde{k}$, we get
\[
\frac{\varepsilon}{2 c^2 G}\sum_{k = \tilde{k}}^\infty s_k < \infty . 
\]
This implies that $\sigma_k > B$ (Line 11 in Algorithm \ref{alg_second}) can only happen a finite number of times, so $\delta_l$ does not converge to zero. 
\end{proof}

Lemma \ref{lem_contradiction_1} and Lemma \ref{lem_contradiction_2} together provide the desired contradiction. 
The theorem follows. 

\section*{Acknowledgements}

This work was done when J.-K.~You was a research assistant at the Department of Computer Science and Information Engineering, National Taiwan University. 
This work is supported by the Young Scholar Fellowship (Einstein Program) of the National Science and Technology Council of Taiwan under grant numbers MOST 108-2636-E-002-014, MOST 109-2636-E-002-025,  MOST 110-2636-E-002-012, and MOST 111-2636-E-002-019 and by the research project “Pioneering Research in Forefront Quantum Computing, Learning and Engineering” of National Taiwan University under grant number NTU-CC-111L894606. 

\bibliography{list}

\appendix

\section{Optimization Problems Violating Lipschitz/Smoothness Conditions} \label{appendix_problems}

Consider a stock market of $d$ stocks. 
Denote the return rates of the $n$ stocks by an entry-wise non-negative random variable $a$ taking values in $\mathbb{R}^d$. 
Suppose $a$ takes the value $a_i$ with probability $p_i$\footnote{In general, the support of $a$ does not need to be finite. We focus on the finite support case to ease the discussion.}. 
The growth-optimal portfolio selection (aka the Kelly criterion) is given by
\begin{equation}
x^\star \in \argmin_{x \in \Delta} f ( x ) , \quad f ( x ) \coloneqq \sum_{i = 1}^n p_i \left[ - \log \braket{ a_i, x } \right] ,  \label{eq_portfolio}
\end{equation}
where $\Delta$ denotes the probability simplex in $\mathbb{R}^n$. 
It is easily checked that the optimization problem \eqref{eq_portfolio} is convex and the function $f$ is neither Lipschitz nor smooth. 
A simple calculation shows that $f$ is not smooth relative to the negative Shannon entropy \citep{Li2019a}.
Although $f$ is smooth relative to the Burg entropy \citep{Bauschke2017}, the resulting iteration rule lacks a closed-form and can be computationally expensive when the $n$ is very large. 

The maximum-likelihood estimator (MLE) for Poisson inverse problems can be transformed to \eqref{eq_portfolio} after a smart entry-wise scaling \citep{Vardi1993,Ben-Tal2001}. 
The MLE for quantum state tomography takes the form \citep{Hradil1997}
\begin{equation}
\rho^\star \in \argmin_{\rho \in \mathcal{D}} \frac{1}{n} \sum_{i = 1}^n \left[ - \log \tr ( A_i \rho ) \right] , \label{eq_qst}
\end{equation}
where $\mathcal{D}$ denotes the set of Hermitian positive semi-definite matrices of unit traces and $A_i$ are Hermitian positive semi-definite matrices. 
When all matrices involved share the same eigenbasis, \eqref{eq_qst} becomes a vector optimization problem of the form \eqref{eq_portfolio}; 
that is, \eqref{eq_qst} is exactly the non-commutative counterpart of \eqref{eq_portfolio}. 
The expressions of quantum R\'{e}nyi divergences are complicated, so we omit them; 
the interested reader is referred to \citet{You2022}. 

Consider solving \eqref{eq_portfolio} by projected gradient descent. 
Even if we ignore the step size selection issue, the so-called ``stalling issue'' \citep{Knee2018} arises. 
Projection onto the probability simplex typically results in sparse vectors \citep{Condat2016}. 
Since $a_i$ can also be sparse and the product of sparse vectors can be exactly zero, the function $f$ and its gradient can be undefined at a sparse iterate;
then, projected gradient descent ``stalls'', though the iterate may be far from the minimizer. 
\citet{Knee2018} and \citet{You2022} provide related discussions for quantum process tomography and minimizing quantum R\'{e}nyi divergences, respectively. 

%\section{Sum of Limit Inferiors}
%
%The proof of Lemma \ref{lem_contradiction_2} uses the following result \citep[Exercise 3.6 (ii)]{Berberian1994}. 
%We provide its complete statement for the reader's convenience. 
%
%\begin{lem} \label{lem_appendix}
%Let $( a_n )_{n \in \mathbb{N}}$ and $( b_n )_{n \in \mathbb{N}}$ be two bounded real sequences. 
%We have 
%\[
%\liminf_{n \to \infty} ( a_n + b_n ) \geq \liminf_{n \to \infty} a_n + \liminf_{n \to \infty} b_n . 
%\]
%\end{lem}

\end{document}